\documentclass[a4paper,12pt]{amsart}

\usepackage{amsmath,amsthm,amsfonts,amssymb,stmaryrd,mathrsfs,enumitem}
\usepackage{latexsym}
\usepackage{fullpage}
\usepackage{a4,color,palatino,fancyhdr}
\usepackage{graphicx}
\usepackage{float}  
\usepackage[small, bf, margin=90pt, tableposition=bottom]{caption}
\RequirePackage{amssymb}
\RequirePackage[T1]{fontenc}
\usepackage{amscd}
\setlength{\topmargin}{.2cm}
\setlength{\oddsidemargin}{.42cm}
\setlength{\evensidemargin}{0.42cm}
\setlength{\textheight}{23.8cm}
\setlength{\textwidth}{15.2cm}
\usepackage{xypic}
\input{xy}
\xyoption{all}
\xyoption{poly}
\usepackage[all]{xy}
\usepackage{tikz}
\newtheorem{theorem}{Theorem}[section]

\newtheorem{lemma}[theorem]{Lemma}
\newtheorem{corollary}[theorem]{Corollary}
\newtheorem{remark}{Remark}

\newtheorem{con}[theorem]{Conjecture}

\numberwithin{equation}{section}



\title[A proof of Kosniowski conjecture]{A proof of Kosniowski conjecture}
\author{Zhi L\"{u}}

\address{School of Mathematical Sciences, Fudan University, Shanghai 200433, P. R. China}
\email{zlu@fudan.edu.cn}

\subjclass[2010]{58J26, 57S25, 32Q55, 37C25.}
\keywords{Kosniowski conjecture, unitary $S^1$-manifold, rigid $T_{x,y}$-genus.}
\thanks{The author is partially supported by the NSFC grant No. 11971112.}

\begin{document}


\begin{abstract}
Let  $M$ be a unitary $S^1$-manifold with only isolated fixed points such that $M$ is not a boundary.
We show that $4\chi(M)>\dim M$, where $\chi(M)$ is the Euler characteristic of $M$. This gives an affirmative answer  of  Kosniowski conjecture.

\end{abstract}
\maketitle

\section{Introduction}
A unitary manifold is a smooth closed manifold with a stablely complex structure on its tangent bundle.
With respect to smooth $S^1$-actions on unitary manifolds, there is a standing more than 40-year-old conjecture, posed by Kosniowski in his 1979's paper~\cite[Conjecture A]{Ko2}. Specifically speaking,  suppose that $M$ is a unitary $S^1$-manifold with isolated fixed points such that $M$ is not a boundary.
  Kosniowski conjectured that  the number of fixed points is greater than $f(\dim M)$ where $f$ is some (linear) function. He further noted that  the most likely function is $f(x)={x\over 4}$.
    We know from \cite[Chapter 1, (1.5)]{AP} that
    $\chi(M)=\chi(M^{S^1})$ where $\chi(\cdot)$ denotes  the Euler characteristic and $M^{S^1}$ is the fixed point set. So $\chi(M)$ is equal to the number of  fixed points. With this result together,  Kosniowski conjecture is stated  as follows.
 \begin{con}[Kosniowski]\label{conj}
 Suppose that $M$ is a nonbounding unitary $S^1$-manifold with isolated fixed points. Then
 $$4\chi(M)>\dim M.$$
 \end{con}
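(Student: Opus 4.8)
The plan is to reduce the statement to a sharp lower bound on the number of fixed points and then to extract that bound from the rigidity of the two-variable $T_{x,y}$-genus. First I would record the elementary but decisive reduction. Since the fixed point set is finite, $\chi(M)$ equals the number $q$ of fixed points; indeed, localizing the equivariant top Chern class of the stable tangent bundle at a fixed point $p$ with tangent weights $w_1(p),\dots,w_n(p)$ gives the contribution $\tfrac{u^{n}\prod_i w_i(p)}{u^{n}\prod_i w_i(p)}=1$, so $c_n[M]=\chi(M)=q$. Moreover a closed unitary manifold carrying a fixed-point-free $S^1$-action is null-cobordant (a Conner--Floyd type fact), so the hypothesis that $M$ does not bound is equivalent to $q\ge 1$. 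Thus the theorem becomes the purely geometric assertion that a unitary $S^1$-action on $M^{2n}$ with a nonempty finite fixed set has strictly more than $n/2$ fixed points, which is what I must prove.

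Next I would set up equivariant localization for the rigid $T_{x,y}$-genus. The localization formula expresses the equivariant $T_{x,y}$-genus as $\sum_{p}\prod_{i=1}^{n} L\big(x,y,t^{w_i(p)}\big)$ for an explicit local factor $L$, where $t$ is the equivariant parameter. The crucial input is rigidity: this meromorphic function of $t$ is in fact constant, equal to the polynomial $T_{x,y}(M)\in\mathbb{Z}[x,y]$. Granting rigidity, the individual summands have poles at roots of unity which must cancel in the total; writing out this cancellation and examining the limits $t\to 0$ and $t\to\infty$ produces a system of relations among the fixed point data. Already for the $\chi_y$-specialization the two ends identify the coefficients with the signed counts $b_k=\#\{p:N^-(p)=k\}$, where $N^-(p)$ is the number of negative weights and $\sum_k b_k=q$, and comparing the two limits forces the Poincar\'e-duality symmetry $b_k=b_{n-k}$.

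The heart of the argument is a counting step converting these rigidity relations into $2q>n$. The point is that $q$ fixed points can generate a polynomial identity of only bounded complexity: morally each fixed point feeds two independent pieces of data (the two ends $t\to 0,\infty$, equivalently the two variables $x,y$), so the span of the fixed point contributions has dimension at most $2q$, whereas nonvanishing forces the identity to carry information up to the top dimension $n$. I would make this precise by showing that if $q\le n/2$ then the full two-variable rigidity relations can be satisfied only when every coefficient of $T_{x,y}(M)$, and hence every relevant Chern number, vanishes; by the reduction above this forces $q=0$, contradicting $q\ge 1$. The constant $\tfrac14$ (slope $2$) is exactly the ratio between the two data per fixed point and the single unit of dimension each must cover.

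I expect two places to be genuinely hard. The first is establishing rigidity of the $T_{x,y}$-genus for \emph{stably} complex (merely unitary, not integrable or K\"ahler) $S^1$-manifolds, where one cannot invoke holomorphic Lefschetz theory directly and must instead argue via the $S^1$-equivariant index together with the absence of poles of the localized expression. The second, and the true crux, is the combinatorial optimization: turning the rigidity relations into the clean bound $2q>n$ with the correct constant. The symmetry $b_k=b_{n-k}$ alone is far too weak, since it permits the counts to concentrate near $k=n/2$ and thus allows $q$ small; the genuine linear lower bound must come from the full two-variable identity rather than the $\chi_y$-specialization. Pinning down precisely which feature of the weight system becomes obstructed once $q\le n/2$ is where I anticipate the main difficulty.
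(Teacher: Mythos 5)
Your overall strategy --- localize the rigid $T_{x,y}$-genus at the isolated fixed points, reduce the conjecture to the bound ``number of fixed points $>n/2$'', and derive a contradiction from the rigidity relations when there are too few fixed points --- is exactly the paper's. But the step you yourself flag as ``the true crux'' is precisely the one you do not supply, and the sketch you give of it would not work. The dimension-count heuristic (``two data per fixed point, so the span has dimension at most $2q$'') is not an argument, and the paper's actual mechanism is quite specific: starting from the $[\frac{n}{2}]+1$ independent identities $\chi^l(M)=\sum_i\varepsilon_i\sigma_l(q^{w_{i,1}},\dots,q^{w_{i,n}})f_i(q)$ (the symmetry $\chi^l(M)=(-1)^n\chi^{n-l}(M)$ halves the $n+1$ coefficients of the $\chi_y$-genus), it uses Newton's identities to replace $\sigma_l$ by $\sigma_1^l$ up to correction terms absorbed into the right-hand side, so that the coefficient matrix becomes a Vandermonde matrix in the quantities $\sigma_{1,i}=\sigma_1(q^{w_{i,1}},\dots,q^{w_{i,n}})$. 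If the number $m$ of fixed points satisfies $m\le[\frac{n}{2}]$, the system is overdetermined, and solving the resulting compatibility conditions yields cyclic relations $b_l(q)=\sigma_{1,l}b_{l-1}(q)$; the contradiction is then that $b_0=\chi^0(M)$ and $b_1=\chi^1(M)$ are nonzero \emph{constants} while $\sigma_{1,1}$ is a nonconstant function of $q$. None of this structure is visible from your outline, so the central claim remains unproved.

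Your proposed endgame is also incorrect on two points. First, vanishing of every coefficient of $T_{x,y}(M)$ does not imply that $M$ bounds: $T_{x,y}$ is only a two-parameter family of genera and is very far from a complete invariant of $\Omega^U_*\otimes\mathbb{Q}$, so you cannot conclude $q=0$ from $T_{x,y}(M)=0$. (The nonbounding hypothesis enters the paper differently: it guarantees that the vector $(\varepsilon_1f_1(q),\dots,\varepsilon_mf_m(q))$ is nonzero and that not all $b_l$ vanish.) Second, $c_n[M]=\chi(M)$ is false for unitary (as opposed to almost complex) manifolds: the local contribution of a fixed point to $c_n[M]$ carries the sign $\varepsilon_i=\pm1$ comparing the two orientations of $\mathcal{T}_{p_i}M$, so $c_n[M]$ is a \emph{signed} count of fixed points, and Section 4 of the paper shows only that $\chi(M)\equiv c_n[M]\bmod 2$. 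Likewise ``$M$ nonbounding if and only if there is at least one fixed point'' holds in one direction only. By contrast, the rigidity of $T_{x,y}$ for merely unitary $S^1$-manifolds, which you list as a main difficulty, is available off the shelf (Krichever, as extended in Buchstaber--Panov), so that is not where the work lies.
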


\begin{remark}
  It is well-known that if $M$ is a unitary $S^1$-manifold with isolated fixed points, then $\dim M$ must be even. Write $\dim M=2n$, then Conjecture~\ref{conj} becomes $2\chi(M)>n$.
\end{remark}
 Unitary manifolds form a class of nice behaved geometric objects, including  complex manifolds, almost complex manifolds and symplectic manifolds etc. Thus, Kosniowski conjecture will probably play an important role on the various areas, such as topology,  complex geometry,   symplectic geometry, transformation groups and so on.

\vskip .2cm
When $\chi(M)=2$,  in his thesis Kosniowski first proved the conjecture and determined that the possible value of $n$ is $1$ or $3$ (also see~\cite{Ko1, Ko2}).  Under the condition that the $S^1$-action on $M^{2n}$ can be extended to the $T^n$-action on $M^{2n}$, L\"u--Tan in \cite{LT} proved the conjecture. Ma--Wen in \cite{MW} also discussed the case in which the $S^1$-action on $M^{2n}$ can be extended to the $T^{n-1}$-action on $M^{2n}$.  When $\chi(M)=3$ and $M$ is assumed to be almost complex, Jang showed in \cite{J3} that only $n=2$ happens, so the conjecture holds in this case.
In the setting of compact symplectic manifolds with symplectic circle actions fixing isolated points, Pelayo--Tolman showed in \cite{PT} that if the weights satisfy some subtle condition, then the action has at least $n+1$ isolated fixed points. In the setting of almost complex manifolds with circle actions,  Li--Liu showed in \cite{LL} that if  an almost complex manifold $M$ has some nonzero Chern number, then  any $S^1$-action on $M$ must have at least $n+1$ fixed points. All these provide the evidence to support the conjecture.

\vskip .2cm
The main tool  used here is the localization formula of rigid $T_{x,y}$-genus of $M$ in terms of the fixed point data, essentially due to Krichever~\cite{K}, which is a generalization of Atiyah--Hirzebruch formula for the $\chi_y$-genus of a complex $S^1$-manifold (\cite{AH}).
We will see that this formula plays an essential role on our work.
  We first read out a system of equations from the localization formula of the $T_{x,y}$-genus of $M$. Then using Newton's formula, we further  change this system into a new system such that its coefficient matrix is a Vandemonde matrix, from which we induce some cyclic equalities  if $\chi(M)< [{n\over 2}]+1$, giving a contradiction. This establishes  $\chi(M)\geq [{n\over 2}]+1>{n\over 2}$, thus obtaining the affirmative answer  of  Kosniowski conjecture.  The result is stated as follows.

 \begin{theorem}\label{main}
 Suppose that $M$ is a nonbounding unitary $S^1$-manifold with isolated fixed points. Then $4\chi(M)>\dim M$.
 \end{theorem}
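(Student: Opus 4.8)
The plan is to combine the Krichever rigidity of the two-variable $T_{x,y}$-genus with the smallness of the fixed point set, so as to contradict the nonbounding hypothesis whenever $\chi(M)$ is too small. Write $\dim M = 2n$, so the desired inequality is equivalent to $\chi(M) \ge \lfloor n/2\rfloor + 1$. Let $M^{S^1} = \{p_1,\dots,p_N\}$ with $N = \chi(M)$, and at each fixed point $p_i$ let $w_{i,1},\dots,w_{i,n}$ be the nonzero integer weights of the isotropy representation on $T_{p_i}M$. First I would write the equivariant localization formula for the rigid $T_{x,y}$-genus in the multiplicative equivariant variable $t$, obtaining an identity of the shape
\[
T_{x,y}(M) \;=\; \sum_{i=1}^{N} \prod_{j=1}^{n} R_{x,y}\!\left(t^{w_{i,j}}\right),
\]
where $R_{x,y}$ is the local contribution determined by the characteristic series of the genus, each factor carrying a denominator of the form $1 - t^{w_{i,j}}$. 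Krichever's theorem asserts that the right-hand side, a priori a rational function of $t$, is in fact independent of $t$; this is the rigidity we exploit.

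Because the localization sum is constant in $t$, I would expand it and force every $t$-dependent contribution to cancel. Concretely, the poles at roots of unity coming from the denominators $1 - t^{w_{i,j}}$, together with the requirement that the whole sum be a polynomial in $x,y$ alone, produce a family of linear relations among the fixed point data. Collecting the coefficients of the various monomials $x^a y^b$ yields a system of equations whose unknowns are the elementary symmetric functions of the weights. This raw system is awkward, so the next step is to invoke Newton's identities to replace the elementary symmetric functions by power sums of the $w_{i,j}$; this substitution is exactly what recasts the system in Vandermonde form, the nodes being the distinct values carried by the fixed point data (and their associated quantities $t^{w_{i,j}}$), the rows indexed by increasing powers, and the coefficients encoding the multiplicities. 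The standard nonvanishing of Vandermonde determinants with distinct nodes then controls the rank and solvability of the system.

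The crux is a dichotomy forced by this Vandermonde structure. Suppose, toward a contradiction, that $\chi(M) = N \le \lfloor n/2\rfloor$. Then there are too few fixed points to populate the system freely: the number of rigidity constraints produced in the previous step outstrips the available unknowns, so the distinct-node Vandermonde block, being invertible, drives the fixed point data into cyclic equalities, that is, a periodic, self-paired symmetry of the weights $w_{i,j}$. By the localization description of the unitary cobordism class through its fixed point data, such a symmetry makes $[M]$ vanish in $\Omega^U_{2n}$, so $M$ bounds, contradicting the hypothesis. Hence $\chi(M) \ge \lfloor n/2\rfloor + 1 > n/2$, that is $2\chi(M) > n$, equivalently $4\chi(M) > \dim M$.

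I expect the main obstacle to lie in the passage from \emph{few fixed points} to the cyclic equalities, and then from those equalities to bounding. Writing the localization identity and invoking rigidity is routine, and Newton's identities mechanically deliver the Vandermonde form; the delicate part is the combinatorial bookkeeping that turns a rank deficiency of the small Vandermonde system into a genuine symmetry of the weight data, and then confirms that this symmetry is strong enough to annihilate the whole cobordism class rather than merely some partial invariant. Tracking the signs, the precise range of $t$-coefficients that rigidity kills, and the exact threshold $\lfloor n/2\rfloor$ at which the count of constraints overwhelms the unknowns, is where the real work will be concentrated.
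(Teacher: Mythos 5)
Your overall scaffolding matches the paper's: localization of the rigid $T_{x,y}$-genus at the isolated fixed points, Newton's identities to produce a Vandermonde-type coefficient matrix, and a contradiction extracted from ``cyclic equalities'' when there are too few fixed points. But the two steps you yourself flag as delicate are exactly where the proposal has genuine gaps, and the second of them is resolved in the paper by a completely different (and much more elementary) mechanism than the one you propose. First, the threshold $\lfloor n/2\rfloor+1$ does not come for free from counting coefficients of $x^ay^b$: a priori the localization formula gives $n+1$ relations $\chi^0(M),\dots,\chi^n(M)$, and it is the symmetry $\chi^l(M)=(-1)^n\chi^{n-l}(M)$ (Lemma 2.2 of the paper, proved by substituting $q\mapsto 1/q$ and using rigidity) that cuts the count of essentially distinct relations down to $k+1=\lfloor n/2\rfloor+1$. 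You never invoke this, so your count of constraints is unjustified. Second, and more importantly, the Newton step in the paper does not convert elementary symmetric functions into power sums of the integer weights $w_{i,j}$; it writes $l\sigma_l=(-1)^l\sigma_1^l+F_l$ so that the $l$-th row of the coefficient matrix becomes the $l$-th power of the single quantity $\sigma_{1,i}=\sigma_1(q^{w_{i,1}},\dots,q^{w_{i,n}})$, one node per fixed point. The Vandermonde nodes are these $m$ functions of $q$, not weight data per se.

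The decisive gap is your endgame. You assert that the rank deficiency forces ``a periodic, self-paired symmetry of the weights'' and that such a symmetry makes $[M]$ vanish in $\Omega^U_{2n}$, contradicting nonboundedness. Neither claim is substantiated, and the second is not how the argument closes: in the paper the cyclic equalities are relations among the \emph{right-hand sides}, namely $b_l(q)=\sigma_{1,l}\,b_{l-1}(q)$, obtained by explicitly solving for the coefficients $u_l(q)$ of the linear dependence among the $m+1$ truncated systems and substituting back. The contradiction is then immediate and purely algebraic: $b_0(q)=\chi^0(M)$ and $b_1(q)=\chi^1(M)$ are nonzero integers (constants in $q$), while $\sigma_{1,1}(q)=\sum_j q^{w_{1,j}}$ is a nonconstant function of $q$, so $b_1(q)=\sigma_{1,1}(q)b_0(q)$ is impossible. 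The nonbounding hypothesis enters earlier and differently: it guarantees that after merging fixed points with identical weight data (either by cancelling pairs with opposite signs $\varepsilon_i$ up to bordism, or by moving coincident columns to the right-hand side) the solution vector $\bigl(\varepsilon_1f_1(q),\dots,\varepsilon_mf_m(q)\bigr)^\top$ remains nonzero and the $b_l(q)$ are not all zero. Without deriving the explicit form of the cyclic equalities and without a proof that your proposed ``symmetry'' annihilates the bordism class, the contradiction is not established, so the proof as proposed does not go through.
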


Making  use of the localization formula of rigid $L$-genus, our approach is automatically   applied to the case of oriented $S^1$-manifolds, so we also have that

\begin{theorem}\label{main1}
 Suppose that $M$ is a nonbounding oriented closed smooth $S^1$-manifold with isolated fixed points. Then $2\chi(M)>\dim M$.
 \end{theorem}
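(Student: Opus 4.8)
The plan is to mirror the proof of Theorem~\ref{main} step for step, replacing the rigid $T_{x,y}$-genus by the rigid $L$-genus; the latter is rigid in the $S^1$-variable $t$ in exactly the same sense and carries a fixed-point localization formula of the same shape, so the machinery transfers. Write $\dim M=2n$. Since the action has isolated fixed points, each tangent space $T_pM$ splits into $n$ rotation $2$-planes, so $\dim M$ is even and the local data at a fixed point $p$ is a list of integer weights $w_1(p),\dots,w_n(p)$. As the number of fixed points equals $\chi(M)$, the desired inequality $2\chi(M)>\dim M=2n$ is equivalent to $\chi(M)\geq n+1$, so it suffices to prove that $\chi(M)\leq n$ would force $M$ to bound.

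First I would record the localization formula for the rigid $L$-genus. By Krichever-type rigidity this genus is independent of $t$, while its localization formula writes it as a sum over the $\chi(M)$ fixed points of explicit rational functions of $t$ whose factors have the form $\tfrac{t^{w_i(p)}+1}{t^{w_i(p)}-1}$ (suitably deformed by the auxiliary parameter), in place of the $\tfrac{1}{t^{w_i(p)}-1}$-type factors that govern the $T_{x,y}$-genus. Independence of $t$ forces every nonconstant Laurent coefficient of this sum to vanish, and reading these off produces a homogeneous system of linear equations in the fixed-point contributions, completely parallel to the system extracted in the proof of Theorem~\ref{main}.

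Next I would invoke Newton's identities to convert these power-sum relations into relations among the elementary symmetric functions of the weights, so that the coefficient matrix of the transformed system becomes a Vandermonde matrix. Under the assumption $\chi(M)\leq n$, the nonsingularity of this Vandermonde matrix forces cyclic equalities among the fixed-point data; exactly as in the proof of Theorem~\ref{main}, these equalities are incompatible with $M$ being nonbounding. Hence $\chi(M)\geq n+1$, which is Theorem~\ref{main1}.

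The main obstacle will be the bookkeeping created by the new local factors. The extra $+1$ in $\tfrac{t^{w}+1}{t^{w}-1}$ changes both the pole pattern at the roots of unity and the limits as $t\to 0,\infty$, so one must check that the Newton-to-Vandermonde reduction and the concluding ``vanishing implies bounding'' step go through unchanged with the new signs. Moreover, the rigid $L$-genus is invariant, up to sign, under reversing a rotation plane, $w_i\mapsto -w_i$; this reflection symmetry has no analogue in the two-variable $T_{x,y}$ setting, and I expect it to be what doubles the number of independent constraints, strengthening the unitary threshold $\chi(M)\geq[\tfrac{n}{2}]+1$ to the oriented threshold $\chi(M)\geq n+1$, which is the source of the constant $2$ in place of $4$.
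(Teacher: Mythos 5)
Your proposal follows exactly the route the paper itself indicates: the paper contains no written proof of Theorem~\ref{main1} beyond the one--sentence remark that the argument for Theorem~\ref{main} ``is automatically applied'' once the rigid $T_{x,y}$-genus is replaced by the rigid $L$-genus. In that sense you are on the intended path, and your reduction of the statement to ``$\chi(M)\leq n$ forces $M$ to bound'' is correct. However, as a standalone proof the proposal has a genuine gap, located precisely at the point you yourself flag as a bookkeeping obstacle, and it is not a bookkeeping issue but a missing idea.

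The unitary argument rests entirely on having $k+1=[\frac{n}{2}]+1$ \emph{separate} identities $\chi^l(M)=\sum_i\varepsilon_i\sigma_l(q^{w_{i,1}},\dots,q^{w_{i,n}})f_i(q)$, one for each coefficient of the two-variable polynomial $T_{x,y}(M)=\sum_l\chi^l(M)x^{n-l}y^l$; the different values of $l$ are separated by the bigrading in $x,y$, and it is the \emph{number} of these equations that Lemma~\ref{key} converts into the lower bound $m\geq k+1$. The $L$-genus is the single specialization $x=y=1$: its localization formula $\mathrm{Sign}(M)=\sum_i\prod_j\frac{t^{w_{i,j}}+1}{t^{w_{i,j}}-1}$ is \emph{one} identity, whose local numerators $\prod_j(t^{w_{i,j}}+1)=\sum_l\sigma_l(t^{w_{i,1}},\dots,t^{w_{i,n}})$ lump all the $\sigma_l$ into a single equation with no second variable to separate them. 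To conclude $\chi(M)\geq n+1$ you must exhibit $n+1$ independent linear constraints on the vector $\bigl(\varepsilon_1f_1(q),\dots,\varepsilon_mf_m(q)\bigr)$, and the proposal never produces them. Your suggested mechanism --- that the reflection symmetry $w\mapsto-w$ of the $L$-genus ``doubles the number of independent constraints'' --- is asserted rather than proved, and it points in the wrong direction: the exactly analogous symmetry in the unitary case (replacing $q$ by $1/q$, i.e.\ Lemma~\ref{sy-equ}) \emph{halves} the count from $n+1$ to $[\frac{n}{2}]+1$ rather than doubling it; moreover the corresponding symmetry does exist for $T_{x,y}$ (it exchanges $x$ and $y$ up to sign), contrary to your claim that it has no two-variable analogue. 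Until the analogue of the system (\ref{e-system1}) with $n+1$ rows and a Vandermonde-reducible coefficient matrix is actually written down in the oriented setting, the analogue of Lemma~\ref{key} with $k=n$ is unavailable and the bound $2\chi(M)>\dim M$ does not follow from what you have written.
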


 This note is organized as follows. In Section~\ref{material}  we review the relative work with respect to genera and rigidity, especially for $T_{x,y}$-genus and its rigidity, and state a useful lemma.
 In Section~\ref{proof} we give the proof of Theorem~\ref{main}.
 Finally we carry out a simple investigation on the relation between the Euler characteristic and the top Chern number of a unitary $S^1$-manifold   in Section~\ref{ob}.

\section{The  $T_{x,y}$-genus of unitary $S^1$-manifolds} \label{material}

First let us review the relative work with respect to genera and rigidity.

\subsection{Genera and rigidity}
In his seminar book~\cite{H}, Hirzebruch
 introduced various multiplicative genera. A genus which is an invariant of unitary bordism classes can be defined  by a homomorphism $\phi:  \Omega_*^U\longrightarrow R$ where $\Omega_*^U$ is the complex bordism ring
 and  $R$ is a commutative ring with unit. If we assume that $R$ has no additive torsion, then $\phi$ is completely determined by
 $\phi\otimes\mathbb{Q}: \Omega_*^U\otimes\mathbb{Q}\longrightarrow R\otimes\mathbb{Q}$. Using universal $R$-valued characteristic classes of special type,  Hirzebruch gave the explicit description of $\phi\otimes\mathbb{Q}$.  For any multiplicative genus $\phi$, there always exists a multiplicative sequence
 $\{K_i(c_1, ..., c_i)\}$ of
polynomials given by a characteristic series such that $\phi(M)=K_n(c_1, ..., c_n)[M]$ where
$c_i$ is the $i$-th Chern class of a unitary manifold $M$ of $\dim_{\mathbb{C}}M=n$ and $[M]$ is the fundamental homology class of $M$.
 \vskip .2cm
 It is well-known  that a multiplicative genus $\phi: \Omega_*^U\longrightarrow R$
 can induce an
equivariant genus $\phi^G: \Omega_*^{U, G}\longrightarrow K(BG)\otimes R$,
 where $\Omega_*^{U, G}$ is  the ring of complex bordisms of unitary manifolds with actions of a connected compact
Lie group $G$.
A multiplicative genus $\phi$ is called {\em rigid} if $\phi^G(M)=\phi(M)$
for any connected compact group $G$, i.e., $\phi(M)\in R$.
  The rigidity for $S^1$ implies the rigidity for  $G$ (also see \cite{AH, BP, K}),
so the former one is more essential. When $G=S^1$, the universal classifying space $BS^1$ is $\mathbb{C}P^\infty$, so the ring $K(BS^1)\otimes R$
is isomorphic to the ring of formal power series $R[[u]]$, where  $R$ is often chosen as $\mathbb{Q}$.
\vskip .2cm
In \cite{AH},  Atiyah--Hirzebruch  proved
that $L$-genus (i.e., signature)  is rigid for oriented $S^1$-manifolds and $\chi_y$-genus is rigid
for  complex $S^1$-manifolds, and gave the corresponding localization formulae. In \cite{K},  Krichever
introduced the equivariant $T_{x,y}$-genus. He proved the rigidity of  $T_{x,y}$-genus of an almost $S^1$-manifold $M$ and established the localization formula of  $T_{x,y}$-genus of $M$.
As noted in \cite[Page 382, Remark]{BP}, Krichever carried out his work by implicitly assuming manifolds to be almost complex. However, actually his proof automatically extends to the unitary case. For more details, see~\cite{BP,  BPR, K}. In addition, Musin also showed in \cite{M} that if a genus is rigid, then it is $T_{x,y}$-genus. In the view points of complex analysis and number-theoretic abstraction, $T_{x,y}$-genus
was further discussed in \cite{LM}.

\subsection{Localization formula of $T_{x,y}$-genus} \label{local}

Throughout the following, assume that $M$ is a unitary manifold with an action of $S^1$ preserving the unitary structure and fixing only isolated points $p_1, ..., p_m$. Choose a fixed point $p_i$, the tangent space $\mathcal{T}_{p_i}M$ at $p_i$ is a complex $S^1$-representation, so this forces $\dim M=\dim \mathcal{T}_{p_i}M$ to be even.
 Let $\dim M=2n$. Then $\mathcal{T}_{p_i}M$ can be decomposed into the sum of $n$ irreducible $S^1$-representations
$$\mathcal{T}_{p_i}M=\bigoplus_{j=1}^n T_{p_i, j}$$
where  the action of $S^1$ on each $T_{p_i,j}$ is given  by
$(g, z)\longmapsto g^{w_{i,j}}z$
for  $z\in T_{p_i,j}$, $g\in S^1$ and some nonzero integer $w_{i,j}$. The collection $\{w_{i,1}, ..., w_{i, n}\}$, denoted by ${\bf w}_i$,  gives the weight set of the complex $S^1$-representation $\mathcal{T}_{p_i}M$.
The orientation of $M$ and the orientation of the unitary structure on the tangent bundle $\mathcal{T}M$ naturally induce two orientations on $\mathcal{T}_{p_i}M$. Thus, the equivariant Euler class of the normal bundle $\nu_{p_i}$ to $p_i$ in $M$ is
$$\chi^{S^1}(\nu_{p_i})=\varepsilon_i\chi^{S^1}(\mathcal{T}_{p_i}M)$$
where $\varepsilon_i=+1$ if two orientations agree, and $\varepsilon_i=-1$ otherwise.

\vskip .2cm
Let $T_{x,y}(M)$ be the $T_{x,y}$-genus of $M$. Its $S^1$-equivariant version is denoted by $T^{S^1}_{x,y}(M)$,  corresponding to the characteristic series
$$H_{x,y}(u)={{u(xe^{u(x+y)}+y)}\over {e^{u(x+y)}-1}}.$$
As mentioned as before, the $T_{x,y}$-genus of $M$ is rigid, so $T_{x,y}(M)=T^{S^1}_{x,y}(M)$.
 Then the localization formula of $T^{S^1}_{x,y}(M)$ is
 $$T^{S^1}_{x,y}(M)=\sum_{i=1}^m \varepsilon_i\prod_{j=1}^n{{H_{x,y}(w_{i,j}u)}\over {w_{i,j}u}}$$
Write $q^{-1}=e^{(x+y)u}$. Then $T^{S^1}_{x,y}(M)$ becomes
\begin{equation}\label{Laurant}
T^{S^1}_{x,y}(M)=\sum_{i=1}^m\varepsilon_i\prod_{j=1}^n\frac{x+yq^{w_{i,j}}}{1-q^{w_{i,j}}}\in
\mathbb{Z}[x, y][[q]]
\end{equation}
due to  Krichever (also see \cite[Theorem 9.4.8]{BP}). Let ${\bf w}_i^+$ (resp. ${\bf w}_i^-$) denote the collection of all positive weights (resp. all negative weights) in ${\bf w}_i$ at the fixed point $p_i$.
 By $|{\bf w}_i^\pm|$ we mean the number of weights in ${\bf w}_i^\pm$, so $|{\bf w}_i^+|+|{\bf w}_i^-|=n$.  Furthermore, as $q\longrightarrow 0$,  we have that
\begin{theorem}[Generalized Atiyah--Hirzebruch formula~\cite{K}]\label{K-formula}
$$T_{x,y}(M)=T^{S^1}_{x,y}(M)=\sum_{i=1}^m\varepsilon_i\prod_{j=1}^n\frac{x+yq^{w_{i,j}}}{1-q^{w_{i,j}}}
=\sum_{i=1}^m\varepsilon_ix^{|{\bf w}_i^+|}(-y)^{|{\bf w}_i^-|}\in \mathbb{Z}[x, y].$$
\end{theorem}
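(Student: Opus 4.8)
The plan is to observe that the first two equalities in the statement are already supplied: the leftmost is the rigidity of the $T_{x,y}$-genus, and the middle one is Krichever's localization formula recorded in \eqref{Laurant}. Thus the only thing left to prove is the final equality, namely that the value of the $q$-series
$$S(q):=\sum_{i=1}^m\varepsilon_i\prod_{j=1}^n\frac{x+yq^{w_{i,j}}}{1-q^{w_{i,j}}}$$
equals $\sum_{i=1}^m\varepsilon_ix^{|{\bf w}_i^+|}(-y)^{|{\bf w}_i^-|}$. The key point I would use is that, by rigidity, $S(q)=T_{x,y}(M)\in\mathbb{Z}[x,y]$ is independent of $q$; hence $S(q)$ coincides with its constant term, equivalently with its value as $q\to 0$. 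So it suffices to evaluate $S(q)$ at $q=0$, and since the sum is finite this reduces to computing the $q\to 0$ behavior of each factor $\frac{x+yq^{w_{i,j}}}{1-q^{w_{i,j}}}$.

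First I would treat a positive weight $w_{i,j}>0$: then $q^{w_{i,j}}\to 0$ as $q\to 0$, so the factor tends to $\frac{x}{1}=x$. For a negative weight $w_{i,j}<0$ the naive evaluation is indeterminate, since $q^{w_{i,j}}\to\infty$; here I would first rewrite the factor by multiplying numerator and denominator by $q^{-w_{i,j}}=q^{|w_{i,j}|}$, obtaining
$$\frac{x+yq^{w_{i,j}}}{1-q^{w_{i,j}}}=\frac{xq^{|w_{i,j}|}+y}{q^{|w_{i,j}|}-1},$$
which is a genuine power series in $q$ with constant term $\frac{y}{-1}=-y$. Consequently each positive weight contributes a factor $x$ and each negative weight a factor $-y$ in the limit, so
$$\lim_{q\to 0}\prod_{j=1}^n\frac{x+yq^{w_{i,j}}}{1-q^{w_{i,j}}}=x^{|{\bf w}_i^+|}(-y)^{|{\bf w}_i^-|}.$$
Multiplying by $\varepsilon_i$ and summing over $i$ then yields the claimed formula.

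The step that requires the most care is the negative-weight case: the summands, read factor by factor, appear to diverge as $q\to 0$, and one must rewrite each negative-weight factor as above to see that it is in fact a well-defined element of $\mathbb{Z}[x,y][[q]]$ with the stated constant term; this rewriting is also exactly what makes the identification $S(q)\in\mathbb{Z}[x,y][[q]]$ in \eqref{Laurant} legitimate. The only other subtlety is conceptual rather than computational, namely the appeal to rigidity guaranteeing that the finite $q$-series $S(q)$ is actually constant in $q$, so that passing to $q\to 0$ recovers its exact value rather than merely a leading term. Once these two points are in place, the conclusion $T_{x,y}(M)=\sum_{i=1}^m\varepsilon_ix^{|{\bf w}_i^+|}(-y)^{|{\bf w}_i^-|}\in\mathbb{Z}[x,y]$ follows immediately.
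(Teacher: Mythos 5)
Your proposal is correct and follows essentially the same route as the paper, which simply invokes rigidity and lets $q\to 0$ in the localization formula \eqref{Laurant} (attributing the result to Krichever without spelling out the limit). Your explicit factor-by-factor computation, in particular the rewriting of the negative-weight factors as $\frac{xq^{|w_{i,j}|}+y}{q^{|w_{i,j}|}-1}$ with constant term $-y$, correctly fills in the details the paper leaves implicit.
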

\begin{remark}
As $q\longrightarrow\infty$, there is also another expression of  $T_{x,y}(M)$ $$T_{x,y}(M)=\sum_{i=1}^m\varepsilon_ix^{|{\bf w}_i^-|}(-y)^{|{\bf w}_i^+|}.$$
\end{remark}
Clearly $T_{x,y}(M)=T^{S^1}_{x,y}(M)$  does not depend on $q$ and (\ref{Laurant}) is actually a Laurant polynomial in $q$.
\vskip .2cm

Combining with the same terms in $\sum\limits_{i=1}^m\varepsilon_ix^{|{\bf w}_i^+|}(-y)^{|{\bf w}_i^-|}$, we may write
$$ \sum_{i=1}^m\varepsilon_ix^{|{\bf w}_i^+|}(-y)^{|{\bf w}_i^-|}=\sum_{l=0}^n \chi^l(M) x^{n-l}y^{l}$$
where each $\chi^l(M)$ is an integer.
Compare with two sides of the following equation
$$\sum_{i=1}^m\varepsilon_i\prod_{j=1}^n\frac{x+q^{w_{i,j}}y}{1-q^{w_{i,j}}}=\sum_{l=0}^n \chi^l(M) x^{n-l}y^{l},$$
an easy observation gives the following formula
\begin{equation}\label{chi-e}
\chi^l(M)=\sum_{i=1}^m\varepsilon_i\frac{\sigma_l(q^{w_{i,1}},..., q^{w_{i,n}})}{\prod\limits_{j=1}^n(1-q^{w_{i,j}})}
\end{equation}
for $0\leq l\leq n$, where $\sigma_l$ is the $l$-th elementary symmetric function. As far as the author knows, the formula (\ref{chi-e}) without signs $\varepsilon_i$ first appeared in \cite[(2.3) of Theorem, page 45]{Ko0} and was used very well in \cite[page 172]{Ko1}.

\begin{remark}
When $x=1$, it is well-known that $T_{1,y}(M)$ is exactly the Atiyah--Hirzebruch $\chi_y$-genus
$$T_{1,y}(M)=\chi_y(M)=\sum_{l=0}^n \chi^l(M) y^{l}.$$
Moreover,
\begin{enumerate}
\item The signature of $M$ is
$
\mathrm{Sign}(M)=T_{1,1}(M)=\sum\limits_{l=0}^n \chi^l(M);
$
\item The top Chern number of $M$ is
$
c_n[M]=T_{1,-1}(M)=\sum\limits_{l=0}^n(-1)^l\chi^l(M);
$
\item
The Todd-genus of $M$ is
$\text{Todd}(M)=T_{1,0}(M)=\chi^0(M).$
\end{enumerate}
\end{remark}

\vskip .2cm
The following result  can be found in \cite{HBJ} for a complex manifold and in \cite{L} for an almost complex manifold. Actually it can be extended to the unitary situation automatically. Here a simple proof is included for a local completeness.

\begin{lemma} \label{sy-equ}
For $0\leq l\leq n$,
$$\chi^l(M)=(-1)^n\chi^{n-l}(M).$$
\end{lemma}
\begin{proof}
Using the rigidity of $T_{x,y}$-genus and replacing $q$ by ${1\over q}$, the formula (\ref{chi-e}) becomes
\begin{align*}
\chi^l(M)=& \sum_{i=1}^m\varepsilon_i\frac{\sigma_l(q^{w_{i,1}},..., q^{w_{i,n}})}{\prod\limits_{j=1}^n(1-q^{w_{i,j}})}
= \sum_{i=1}^m\varepsilon_i\frac{\sigma_l(q^{-w_{i,1}},..., q^{-w_{i,n}})}{\prod\limits_{j=1}^n(1-q^{-w_{i,j}})}\\
=& \sum_{i=1}^m\varepsilon_i\frac{\sigma_l(q^{-w_{i,1}},..., q^{-w_{i,n}})}
{(-1)^n \big(\prod\limits_{j=1}^nq^{-w_{i,j}}\big)\prod\limits_{j=1}^n(1-q^{w_{i,j}})}\\
=& (-1)^n\sum_{i=1}^m\varepsilon_i\frac{{{\sigma_l(q^{-w_{i,1}},..., q^{-w_{i,n}})}\over{\big(\prod\limits\limits_{j=1}^nq^{-w_{i,j}}\big)}}}
{\prod\limits_{j=1}^n(1-q^{w_{i,j}})}=(-1)^n\sum_{i=1}^m\varepsilon_i\frac{\sigma_{n-l}(q^{w_{i,1}},..., q^{w_{i,n}})}{\prod\limits_{j=1}^n(1-q^{w_{i,j}})}\\
=& (-1)^n\chi^{n-l}(M)
\end{align*}
as desired.
\end{proof}

\begin{corollary}
If $n$ is odd, then $\mathrm{Sign}(M)=0$.
\end{corollary}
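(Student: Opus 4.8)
The plan is to combine the signature formula recorded in the Remark with the symmetry relation just established in Lemma~\ref{sy-equ}. Recall that
$$\mathrm{Sign}(M)=T_{1,1}(M)=\sum_{l=0}^n\chi^l(M).$$
When $n$ is odd we have $(-1)^n=-1$, so Lemma~\ref{sy-equ} specializes to $\chi^l(M)=-\chi^{n-l}(M)$, equivalently $\chi^l(M)+\chi^{n-l}(M)=0$, for every $0\leq l\leq n$. The entire argument is to feed this identity into the displayed sum.

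First I would exploit the involution $l\longmapsto n-l$ on the index set $\{0,1,\dots,n\}$. The key observation is that for odd $n$ this involution has \emph{no} fixed point: the equation $l=n-l$ forces $2l=n$, which is impossible when $n$ is odd. Consequently the indices partition into exactly $\frac{n+1}{2}$ disjoint two-element orbits $\{l,n-l\}$, say with $0\leq l\leq \frac{n-1}{2}$ and $\frac{n+1}{2}\leq n-l\leq n$. Regrouping the signature sum according to these orbits gives
$$\mathrm{Sign}(M)=\sum_{l=0}^{(n-1)/2}\bigl(\chi^l(M)+\chi^{n-l}(M)\bigr).$$
By the specialized lemma each summand is zero, and hence $\mathrm{Sign}(M)=0$, as claimed.

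There is essentially no genuine obstacle here; the computation is immediate once the symmetry lemma is in hand. The only point deserving emphasis is the parity step that guarantees no self-paired middle term survives the regrouping. For even $n$, by contrast, the term $\chi^{n/2}(M)$ is its own partner and satisfies only $\chi^{n/2}(M)=\chi^{n/2}(M)$, so it is unconstrained and the signature need not vanish---which is consistent with $\dim M=2n\equiv 0\pmod 4$ in that case. Thus the parity of $n$ is precisely what drives the conclusion.
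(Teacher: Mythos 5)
Your argument is correct and is exactly the (implicit) proof intended by the paper: the corollary is stated without proof immediately after Lemma~\ref{sy-equ}, and the evident route is to pair $\chi^l(M)$ with $\chi^{n-l}(M)$ in the sum $\mathrm{Sign}(M)=\sum_{l=0}^n\chi^l(M)$ and use $\chi^l(M)=-\chi^{n-l}(M)$ for odd $n$, noting that no index is self-paired. Your remark about the even case is a correct and sensible sanity check, but it is not needed for the statement.
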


\section{Proof of Theorem~\ref{main}}\label{proof}
Let $M^{2n}$ be a unitary manifold with an action of $S^1$ preserving the unitary structure and fixing only isolated points $p_1, ..., p_m$. Suppose further that $M^{2n}$ is not a boundary.
Following the notations of Section~\ref{material}, let $f_i(q)={1\over {\prod\limits_{j=1}^n(1-q^{w_{i,j}})}}$ for $1\leq i\leq m$.
 Then  for $0\leq l\leq n$, we rewrite the formula (\ref{chi-e}) into the following
 \begin{align}\label{chi-ec}
\chi^l(M)=\sum_{i=1}^m\varepsilon_i\sigma_l(q^{w_{i,1}},..., q^{w_{i,n}})f_i(q).
\end{align}
We know by Lemma~\ref{sy-equ} that $\chi^l(M)=(-1)^n\chi^{n-l}(M)$. This implies that essentially there are  $[{n\over 2}]+1$ different formulae of the form (\ref{chi-ec}), where $$[{n\over 2}]+1=\begin{cases}
 k+1 & \text{if $n=2k$}\\
 k+1 & \text{if $n=2k+1$}. \end{cases}$$
 Thus it suffices to consider $k+1$ formulae of $\chi^l(M), l=0,1, ..., k$.
  This gives the following system of $k+1$  equations:
 \begin{equation}\label{e-system1}
 \begin{cases}
 \sum\limits_{i=1}^m \varepsilon_i\sigma_0(q^{w_{i,1}}, ..., q^{w_{i,n}})f_i(q) =&\chi^0(M) \\
 \sum\limits_{i=1}^m \varepsilon_i\sigma_1(q^{w_{i,1}}, ..., q^{w_{i,n}})f_i(q)= &\chi^1(M)\\
 \ \ \ \ \ \ \ \ \ \ \ \ \vdots & \ \ \ \ \vdots\\
 \sum\limits_{i=1}^m\varepsilon_i\sigma_{k}(q^{w_{i,1}}, ..., q^{w_{i,n}})f_i(q) =& \chi^k(M).
 \end{cases}
 \end{equation}
 Note that $\sigma_0(q^{w_{i,1}}, ..., q^{w_{i,n}})=1$ for all $i$.
  By the Newton's formula (see~\cite[Problem 16-A]{MS})
 $$s_l-\sigma_1s_{l-1}+\cdots+(-1)^{l-1} \sigma_{l-1}s_1 +(-1)^ll\sigma_l=0,$$
we can induce
 $$l\sigma_l=(-1)^l\sigma_1^l+F_l$$
 for  $l\geq 2$, where $F_l$ is a symmetric polynomial.
 For example, when $l=2$, we have $2\sigma_2=\sigma_1^l-s_2$ so $F_2=-s_2$. When $l=3$, we have $$3\sigma_3=(-1)^3\sigma_1^3+3\sigma_1\sigma_2+s_3$$ so $F_3=3\sigma_1\sigma_2+s_3$. Then   for $l\geq 2$,
 \begin{align*}
 l\chi^l(M)=&\sum\limits_{i=1}^m \varepsilon_il\sigma_{l}(q^{w_{i,1}}, ..., q^{w_{i,n}})f_i(q)\\
 =&  \sum\limits_{i=1}^m \varepsilon_i(-1)^{l}\sigma_1^l(q^{w_{i,1}}, ..., q^{w_{i,n}})f_i(q)+\sum\limits_{i=1}^m \varepsilon_iF_l(q)f_i(q)\\
 \end{align*}
 where $F_l(q)$ is a certain function in $q$.
  Set $\mathcal{F}_l(q)=\sum\limits_{i=1}^m \varepsilon_iF_l(q)f_i(q)$.
 Then we can further change the  system (\ref{e-system1}) of equations into
  \begin{equation}\label{e-system2}
 \begin{cases}
 \sum\limits_{i=1}^m \varepsilon_if_i(q)&= \chi^0(M) \\
 \sum\limits_{i=1}^m \varepsilon_i\sigma_1(q^{w_{i,1}}, ..., q^{w_{i,n}})f_i(q)&=\chi^1(M)\\
 \sum\limits_{i=1}^m \varepsilon_i\sigma_1^2(q^{w_{i,1}}, ..., q^{w_{i,n}})f_i(q)& = 2\chi^2(M)-\mathcal{F}_2(q)\\
 \ \ \ \ \ \ \ \ \ \ \ \ \ \ \vdots & \  \vdots\\
 \sum\limits_{i=1}^m \varepsilon_i\sigma_1^k(q^{w_{i,1}}, ..., z^{w_{i,n}})f_i(q)&=(-1)^k\big(k\chi^k(M)-\mathcal{F}_k(q)\big).
 \end{cases}
 \end{equation}
 Obviously, we can regard
$${\bf x}(q)=\big(\varepsilon_1f_1(q), \varepsilon_2f_2(q), ..., \varepsilon_mf_m(q)\big)^\top$$  as a  nonzero solution of the equation system (\ref{e-system2}) with the following coefficient matrix
 \begin{gather*}
 A(q)=
\begin{pmatrix}
   1 &\cdots & 1      \\
\sigma_1(q^{w_{1,1}}, ..., q^{w_{1,n}})    & \cdots & \sigma_1(q^{w_{m,1}}, ..., q^{w_{m,n}})        \\
\vdots&  &  \vdots   \\
\sigma_1^{k}(q^{w_{1,1}}, ..., q^{w_{1,n}})   &   \cdots     & \sigma_1^{k}(q^{w_{m,1}}, ..., q^{w_{m,n}})  \\
\end{pmatrix}.
\end{gather*}
For a convenience, we simply write
$$\sigma_{1, i}=\sigma_1(q^{w_{i,1}}, ..., q^{w_{i,n}}) \ \text{for}\  i=1, ..., m$$
and
\begin{align*}
{\bf b}(q)=& (b_0(q), b_1(q), b_2(q), ..., b_k(q))^\top\\
= &\Big(\chi^0(M), \chi^1(M),  2\chi^2(M)-\mathcal{F}_2(q), ..., (-1)^k\big(k\chi^k(M)-\mathcal{F}_k(q)\big)\Big)^\top.
\end{align*}
Then the system (\ref{e-system2}) can be  written as the following simple form
\begin{equation}\label{e-system2-1}
A(q){\bf x}(q)={\bf b}(q).
\end{equation}

Next we are going to prove the following key lemma.

\begin{lemma}\label{key}
It is impossible that $k+1>m$.
\end{lemma}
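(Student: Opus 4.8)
The plan is to argue by contradiction, exploiting the Vandermonde structure hidden in \eqref{e-system2-1}. Writing $a_i:=\sigma_{1,i}=\sum_{j=1}^n q^{w_{i,j}}$ as shorthand, the coefficient matrix $A(q)$ is exactly the $(k+1)\times m$ Vandermonde matrix whose $i$-th column is $(1,a_i,a_i^2,\dots,a_i^k)^\top$. Note that the solution ${\bf x}(q)=(\varepsilon_1 f_1(q),\dots,\varepsilon_m f_m(q))^\top$ is nonzero — indeed every entry is a nonzero rational function of $q$, since $f_i(q)=1/\prod_j(1-q^{w_{i,j}})$ and $\varepsilon_i=\pm1$ — and that the system is consistent precisely because this explicit solution exists (nonboundedness guarantees $m\ge1$). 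I would assume for contradiction that $k+1>m$, so that $A(q)$ has strictly more rows than columns.

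First I would reduce to the case of distinct nodes. The values $a_1,\dots,a_m$ need not be distinct (two fixed points with the same weight multiset produce the same $a_i$), so I group the indices by the value of $a_i$, writing the distinct values as $\tilde a_1,\dots,\tilde a_r$ with $r\le m$, and setting $g_s(q)=\sum_{a_i=\tilde a_s}\varepsilon_i f_i(q)$. Then \eqref{e-system2-1} collapses to $\tilde A(q)\,{\bf g}(q)={\bf b}(q)$, where $\tilde A(q)$ is the $(k+1)\times r$ Vandermonde matrix in the distinct nodes $\tilde a_1,\dots,\tilde a_r$ and $r\le m<k+1$.

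The next step is to manufacture cyclic relations from the excess rows. Since $\tilde A(q)$ has $(k+1)-r\ge1$ more rows than its full column rank $r$, its left null space is nonzero; explicitly, writing $P(t)=\prod_{s=1}^r(t-\tilde a_s)=\sum_{j=0}^r p_j(q)\,t^j$ and using $\tilde a_s^{\,l_0}P(\tilde a_s)=0$, the coefficient strings $(p_0,\dots,p_r)$ placed in rows $l_0,\dots,l_0+r$ for $l_0=0,\dots,k-r$ give $k+1-r$ independent left null vectors of $\tilde A(q)$. Pairing each of them with $\tilde A(q){\bf g}(q)={\bf b}(q)$ annihilates the left-hand side and leaves a nontrivial relation purely among the right-hand entries, i.e. a family of cyclic equalities
$$\sum_{j=0}^r p_j(q)\,b_{l+j}(q)=0,\qquad l=0,1,\dots,k-r,$$
whose coefficients $p_j(q)$ are the (signed) elementary symmetric functions of the nodes $\tilde a_s(q)$. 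Equivalently, the sequence $b_0(q),\dots,b_k(q)$ obeys a linear recurrence of order $r\le m$.

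The hard part — and the step I expect to be the main obstacle — is to turn these functional identities into a numerical contradiction. Here the explicit shape of ${\bf b}(q)$ from \eqref{e-system2} must be substituted ($b_0=\chi^0(M)$, $b_1=\chi^1(M)$, and $b_l=(-1)^l(l\chi^l(M)-\mathcal F_l(q))$ for $l\ge2$), and the nonboundedness hypothesis brought to bear, the goal being that a recurrence of order $\le m<k+1$ is incompatible with fixed-point data coming from a class $[M]\neq0$. Because the nodes $\tilde a_s(q)$ and the correction terms $\mathcal F_l(q)$ genuinely depend on $q$, these are identities in $q$ rather than constraints on constants, so I expect the contradiction to be extracted by a limiting or leading-coefficient analysis — for instance letting $q\to0$ and invoking the generalized Atiyah--Hirzebruch formula (Theorem~\ref{K-formula}) to pin down the limiting values of the $b_l$, or letting $q\to1$, where all nodes collapse to $n$ and each $f_i(q)$ acquires an order-$n$ pole, so that the leading Laurent coefficients encode the fixed-point data. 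Controlling the limits of the symmetric functions $p_j(q)$ simultaneously with the $\mathcal F_l(q)$, and verifying that nonboundedness forces the resulting relation to fail, is the delicate heart of the argument.
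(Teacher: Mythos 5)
Your setup tracks the paper's strategy closely: you treat \eqref{e-system2-1} as an overdetermined system whose coefficient matrix is Vandermonde in the nodes $\sigma_{1,i}(q)$, you reduce to distinct nodes (by grouping, where the paper uses a surgery/column-moving argument), and you extract relations among the right-hand entries $b_l(q)$ from the excess rows. Your derivation of the recurrence $\sum_{j=0}^{r}p_j(q)\,b_{l+j}(q)=0$ from explicit left null vectors of the Vandermonde matrix is in fact cleaner and more transparently justified than the paper's construction of the dependence coefficients $u_l(q)$ via the auxiliary system \eqref{diag}.

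The genuine gap is that the proposal stops exactly at the decisive step and never produces a contradiction. In the critical case $k=m=r$ your construction yields a single identity $\sum_{j=0}^{m}p_j(q)b_j(q)=0$ in which both the coefficients $p_j(q)$ (elementary symmetric functions of the $q$-dependent nodes) and the entries $b_l(q)$ for $l\geq 2$ (which contain the correction terms $\mathcal{F}_l(q)$) vary with $q$; nothing numerically false is visible in such an identity, and you yourself defer the extraction of a contradiction to an unexecuted limiting or leading-coefficient analysis at $q\to 0$ or $q\to 1$. The paper's route, by contrast, is engineered precisely to avoid this: it pins down one specific linear dependence $\sum_l u_l(q){\bf b}_l(q)={\bf 0}$ among the truncated vectors, with $u_l(q)$ computed explicitly in \eqref{result2}, and reads off its components, which collapse to the chain $b_l(q)=\sigma_{1,l}b_{l-1}(q)$. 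The first link $b_1(q)=\sigma_{1,1}b_0(q)$ involves only the two \emph{constant} entries $b_0=\chi^0(M)$ and $b_1=\chi^1(M)$; after the cyclic chain is used to argue these are nonzero, the contradiction is immediate because $\sigma_{1,1}=\sum_j q^{w_{1,j}}$ is a nonconstant function of $q$. Your proposal contains no analogue of this isolation of a relation between constant entries, and the single mixed relation you do obtain is strictly weaker than the paper's chain, so as written the argument does not prove the lemma.
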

\begin{proof}
Suppose that $k+1>m$. Let us look at the system (\ref{e-system2-1}). Since  $M^{2n}$ has been assumed to be not a boundary, without a loss of generality, we may assume that all 
$\sigma_{1, i}, i=1, ..., m$ are mutually  distinct. In fact, if there are some two 
$\sigma_{1, i_1}$ and $\sigma_{1, i_2}$ that are both equal, then
their weight sets are the same, too. When $\varepsilon_{i_1}\not=\varepsilon_{i_2}$, we can do a simple surgery to delete two fixed points $p_{i_1}$ and $p_{i_2}$. This will unchange the $S^1$-manifold $M$ up to unitary bordism.
When $\varepsilon_{i_1}=\varepsilon_{i_2}$,
 we can move those terms corresponding to the $i_2$-th  column  of $A(q)$  in the left side of the system (\ref{e-system2-1}) into its right side. Whichever of above cases happens, we can always modify the system (\ref{e-system2-1})  into a new system, satisfying that  all 
 $\sigma_{1, i}$'s appearing in the new coefficient matrix  are distinct. Actually this will reduce the number of
  polynomials in ${\bf x}(q)$ to be smaller, but this smaller number can not become zero since $M^{2n}$ is nonbounding. So the above distinct assumption for all 
 $\sigma_{1, i}$'s  will not produce any essential influence on our further argument.

 \vskip .2cm
 Since $k+1>m$, we have $k+1\geq m+1$.
Without a loss of generality, we may assume that $k=m$, so the system (\ref{e-system2-1}) is formed by $m+1$ equations. Then the system (\ref{e-system2-1}) can produce  $m+1$ systems, each of which consists of $m$ equations, stated as follows:
\begin{equation}\label{e-system2-2}
V_l(q){\bf x}(q)={\bf b}_l(q)
\end{equation}
for $l=0, 1, ..., m$,  where $${\bf b}_l(q)=\big(b_0(q), b_1(q), ..., b_{l-1}(q), \widehat{b_{l}(q)}, b_{l+1}(q), ..., b_m(q)\big)^\top$$ means that the $(l+1)$-th element is deleted in ${\bf b}(q)$, and \begin{gather*}
 V_l(q)=
\begin{pmatrix}
   1 &\cdots & 1& \cdots & 1      \\
\sigma_{1,1}    & \cdots & \sigma_{1,i} &\cdots& \sigma_{1,m}        \\
\vdots& \cdots & &\cdots& \vdots   \\
\sigma^{l-1}_{1,1}    & \cdots & \sigma^{l-1}_{1,i} &\cdots& \sigma^{l-1}_{1,m}        \\
\sigma^{l+1}_{1,1}    & \cdots & \sigma^{l+1}_{1,i}& \cdots& \sigma^{l+1}_{1,m}        \\
\vdots& \cdots && \cdots& \vdots   \\
\sigma_{1,1}^{m}   &   \cdots &\sigma_{1,i}^{m}  &\cdots   & \sigma_{1,m}^{m}  \\
\end{pmatrix}
\end{gather*}
is  a  Vandermonde matrix obtained by deleting the $(l+1)$-th row of $A(q)$.
It is easy to see that ${\bf b}_0(q), {\bf b}_1(q), ..., {\bf b}_m(q)$ are linearly dependent.

\vskip .2cm
Now suppose that
\begin{equation}\label{line}
u_0(q){\bf b}_0(q)+u_1(q){\bf b}_1(q)+\cdots+u_m(q){\bf b}_m(q)={\bf 0}
\end{equation}
where all $u_l(q)$'s are  functions in $q$.
More explicitly,  (\ref{line}) can be written as
\begin{equation}\label{line-a}
\begin{cases}
u_0(q)b_1(q)+ \big(u_1(q)+\cdots +u_m(q)\big)b_0(q) &=0 \\
\big(u_0(q)+u_1(q)\big)b_2(q)+\big(u_2(q)+\cdots+u_m(q)\big)b_1(q) &=0\\
\ \ \ \ \vdots &\  \vdots\\
 \big(u_0(q)+\cdots +u_{m-2}(q)\big)b_{m-1}(q)+\big(u_{m-1}(q)+u_m(q)\big)b_{m-2}(q)&=0 \\
 \big(u_0(q)+\cdots+u_{m-2}(q) +u_{m-1}(q)\big)b_m(q)+u_m(q)b_{m-1}(q)&=0.
\end{cases}
\end{equation}
Our next task is to determine the  precise expressions of all functions $u_l(q)$'s. We proceed our argument as follows.

\vskip .2cm
Combining with (\ref{e-system2-2}) and (\ref{line}), we have that
\begin{equation}\label{line1}
\Big(\sum_{l=0}^mu_l(q)V_l(q)\Big){\bf x}(q)={\bf 0}
\end{equation}
whose coefficient matrix $\sum\limits_{l=0}^mu_l(q)V_l(q)$
is explicitly written as follows:
\begin{align*}\label{coeff}
{\tiny \begin{pmatrix}
\sigma_{1,1}u_0(q)+\sum\limits_{j=1}^mu_j(q) & \sigma_{1,2}u_0(q)+\sum\limits_{j=1}^mu_j(q) & \cdots &\sigma_{1,m}u_0(q)+\sum\limits_{j=1}^mu_j(q)   \\
 \sigma^2_{1,1}\sum\limits_{j=0}^1u_j(q)+\sigma_{1,1}\sum\limits_{j=2}^mu_j(q) &   \sigma^2_{1,2}\sum\limits_{j=0}^1u_j(q)+\sigma_{1,2}\sum\limits_{j=2}^mu_j(q)& \cdots &  \sigma^2_{1,m}\sum\limits_{j=0}^1u_j(q)+\sigma_{1,m}\sum\limits_{j=2}^mu_j(q)     \\
 \vdots  &\vdots & &\vdots & \\
 \sigma^l_{1,1}\sum\limits_{j=0}^{l-1}u_j(q)+\sigma^{l-1}_{1,1}\sum\limits_{j=l}^mu_j(q) & \cdots &  & \sigma^l_{1,m}\sum\limits_{j=0}^{l-1}u_j(q)+\sigma^{l-1}_{1,m}\sum\limits_{j=l}^mu_j(q)   \\
 \vdots  &\vdots & \ddots&\vdots & \\
\sigma^m_{1,1}\sum\limits_{j=0}^{m-1}u_j(q)+\sigma^{m-1}_{1,1}u_m(q) & \cdots &  & \sigma^m_{1,m}\sum\limits_{j=0}^{m-1}u_j(q)+\sigma^{m-1}_{1,m}u_m(q)    \\
\end{pmatrix}.}
\end{align*}
In order to give the  precise expressions of all functions $u_l(q)$'s, we consider $m$ diagonal  elements of the matrix $\sum\limits_{l=0}^mu_l(q)V_l(q)$, which are expressed in $u_l(q)$.
We let these $m$ expressions in $u_l(q)$ become zero, so that  we obtain  the following system of $m$ equations
\begin{equation}\label{diag}
W(q){\bf u}(q)={\bf 0}
\end{equation}
where ${\bf u}(q)=\big(u_0(q), u_1(q), ..., u_m(q)\big)^\top$ and the coefficient matrix
\begin{align*}\label{coeff}
W(q)=
\begin{pmatrix}
   \sigma_{1,1} & 1 & 1 & \cdots & \cdots &1  & 1    \\
 \sigma_{1,2} &  \sigma_{1,2}& 1 &\cdots & \cdots & 1 & 1      \\
  \sigma_{1,3} &  \sigma_{1,3}& \sigma_{1,3} & 1 &\cdots & 1  & 1      \\
\vdots  &\vdots &\ddots &\ddots &\ddots &\vdots &\vdots \\
\sigma_{1,m-1} & \cdots &  \cdots& \sigma_{1,m-1}&\sigma_{1,m-1} & 1 & 1 \\
\sigma_{1,m} & \cdots &  \cdots &  \cdots  &  \sigma_{1,m} &\sigma_{1,m} & 1  \\
\end{pmatrix}.
\end{align*}
By a direct calculation, we obtain that
\begin{equation}\label{result1}
\begin{cases}
u_1(q) & ={{\sigma_{1,1}-\sigma_{1,2}}\over {\sigma_{1,2}-1}}u_0(q)\\
u_1(q)+u_2(q) &= {{\sigma_{1,1}-\sigma_{1,3}}\over {\sigma_{1,3}-1}}u_0(q)\\
\ \ \ \ \vdots &\  \vdots\\
 u_1(q)+\cdots +u_{m-1}(q)&= {{\sigma_{1,1}-\sigma_{1,m}}\over {\sigma_{1,m}-1}}u_0(q)\\
  u_1(q)+\cdots +u_{m-1}(q)+u_m(q) &=-\sigma_{1,1}u_0(q).
\end{cases}
\end{equation}
Taking $u_0(q)=1$, from  (\ref{result1}) we obtain the precise expressions of all $u_l(q)$ in (\ref{line}), which are stated as follows:
\begin{equation}\label{result2}
\begin{cases}
u_0(q) & =1\\
u_1(q) &={{\sigma_{1,1}-\sigma_{1,2}}\over {\sigma_{1,2}-1}}\\
u_2(q) &= {{\sigma_{1,1}-\sigma_{1,3}}\over {\sigma_{1,3}-1}}-{{\sigma_{1,1}-\sigma_{1,2}}\over {\sigma_{1,2}-1}}={{(\sigma_{1,1}-1)(\sigma_{1,2}-\sigma_{1,3})}\over{(\sigma_{1,2}-1)(\sigma_{1,3}-1)}}\\
\ \ \ \vdots &\ \vdots\\
u_{m-1}(q) &= {{\sigma_{1,1}-\sigma_{1,m}}\over {\sigma_{1,m}-1}}-{{\sigma_{1,1}-\sigma_{1,m-1}}\over {\sigma_{1,m-1}-1}}={{(\sigma_{1,1}-1)(\sigma_{1,m-1}-\sigma_{1,m})}\over{(\sigma_{1,m-1}-1)(\sigma_{1,m}-1)}}\\
u_m(q) &= {{\sigma_{1,m}-\sigma_{1,1}}\over {\sigma_{1,m}-1}}-\sigma_{1,1}={{\sigma_{1,m}(1-\sigma_{1,1})}\over{\sigma_{1,m}-1}}.
\end{cases}
\end{equation}
Since all $\sigma_{1, i}$'s are distinct, it is easy to see that all $u_l(q)$'s are nonzero.
Putting  (\ref{result2}) into (\ref{line-a}),  we conclude that
\begin{equation}\label{line-a1}
\begin{cases}
b_1(q) &= \sigma_{1,1}b_0(q)\\
b_2(q) &= \sigma_{1,2}b_1(q)\\
\ \ \ \ \vdots &\  \vdots\\
b_{m-1}(q)&= \sigma_{1, m-1}b_{m-2}(q) \\
 b_m(q)&= \sigma_{1, m}b_{m-1}(q)
\end{cases}
\end{equation}
 which gives  cyclic equalities among $b_0(q), b_1(q), ..., b_m(q)$.
This implies that all $b_0(q)$, $b_1(q), ..., b_m(q)$ must be nonzero.
In fact, if one of them is zero, then the  cyclic equalities in (\ref{line-a1}) will induce that they are all zero. Moreover, we can yield  from the system (\ref{e-system2-1}) that ${\bf x}(q)={\bf 0}$, which is impossible.
\vskip .2cm
Now,  from the equation system (\ref{line-a1}) we easily read out a contradiction equation
$$b_1(q)=\sigma_{1,1}b_0(q)$$
 since $b_0(q)=\chi^0(M)$ and $b_1(q)=\chi^1(M)$ are only nonzero integers.

\vskip .2cm
It remains to prove that   the equation system (\ref{diag}) established beforehand  and all expressions of $u_l(q)$ in (\ref{result2})
are compatible with the system (\ref{line1}). 

\vskip .2cm
First we  check the first  equation in the system (\ref{line1}), which has a little bit difference from the general case.
Since we have assumed that $\sigma_{1,1}u_0(q)+\sum\limits_{j=1}^mu_j(q)=0$, it suffices to check that
$$\sum_{i=2}^m \big(\sigma_{1,i}u_0(q)+\sum\limits_{j=1}^mu_j(q)\big)\varepsilon_if_i(q)$$
must be zero. Using the equations in (\ref{result1}), (\ref{result2}) and (\ref{line-a1}), we directly calculate as follows:
\begin{align*}
&\sum_{i=2}^m \big(\sigma_{1,i}u_0(q)+\sum\limits_{j=1}^mu_j(q)\big)\varepsilon_if_i(q)\\
=& (\sigma_{1,2}-\sigma_{1,1})\varepsilon_2f_2(q)+\cdots +(\sigma_{1,m}-\sigma_{1,1})\varepsilon_mf_m(q)\\
=& \sigma_{1,2}\varepsilon_2f_2(q)+\cdots +\sigma_{1,m}\varepsilon_mf_m(q)
-\sigma_{1,1}(\varepsilon_2f_2(q)+\cdots+\varepsilon_mf_m(q))\\
=& \sigma_{1,2}\varepsilon_2f_2(q)+\cdots +\sigma_{1,m}\varepsilon_mf_m(q)-\sigma_{1,1}(b_0(q)-\varepsilon_1f_1(q))\\
=&\sigma_{1,1}\varepsilon_1f_1(q))+ \sigma_{1,2}\varepsilon_2f_2(q)+\cdots +\sigma_{1,m}\varepsilon_mf_m(q)-\sigma_{1,1}b_0(q)\\
=& b_1(q)-\sigma_{1,1}b_0(q)\\
=&0
\end{align*}
as desired.
\vskip .2cm
 Next let us check the $l$-th equation in the system (\ref{line1}), where $l\geq 2$.
 We need to show that
 \begin{align*}
\sum_{i\not=l}\Big(\sigma^l_{1,i}\sum\limits_{j=0}^{l-1}u_j(q)+\sigma^{l-1}_{1,i}\sum\limits_{j=l}^mu_j(q)\Big)
\varepsilon_if_i(q)\\
\end{align*}
 is equal to zero. From the equations in  (\ref{result2}), we see easily that
$$\sum\limits_{j=0}^{l-1}u_j(q)=1+{{\sigma_{1,1}-\sigma_{1,l}}\over {\sigma_{1,l}-1}}={{\sigma_{1,1}-1}\over {\sigma_{1,l}-1}}$$
and $$\sum\limits_{j=l}^mu_j(q)=-\sigma_{1,1}-{{\sigma_{1,1}-\sigma_{1,l}}\over {\sigma_{1,l}-1}}=-
{{\sigma_{1,l}(\sigma_{1,1}-1)}\over {\sigma_{1,l}-1}}.$$
So
\begin{align*}
&\sum_{i\not=l}\Big(\sigma^l_{1,i}{{\sigma_{1,1}-1}\over {\sigma_{1,l}-1}}-\sigma^{l-1}_{1,i}{{\sigma_{1,l}(\sigma_{1,1}-1)}\over {\sigma_{1,l}-1}}\Big)
\varepsilon_if_i(q)\\
=& {{\sigma_{1,1}-1}\over {\sigma_{1,l}-1}}\Big(\sigma_{1,1}^l\varepsilon_1f_1(q)+\cdots +\sigma_{1,l-1}^l\varepsilon_{l-1}f_{l-1}(q)+\sigma_{1,l+1}^l\varepsilon_{l+1}f_{l+1}(q)+\cdots
+\sigma_{1,m}^l\varepsilon_{m}f_{m}(q)\Big)\\
&- {{\sigma_{1,l}(\sigma_{1,1}-1)}\over {\sigma_{1,l}-1}}\Big(\sigma_{1,1}^{l-1}\varepsilon_1f_1(q)+\cdots +\sigma_{1,l-1}^{l-1}\varepsilon_{l-1}f_{l-1}(q)+\sigma_{1,l+1}^{l-1}\varepsilon_{l+1}f_{l+1}(q)+\cdots\\
&+\sigma_{1,m}^{l-1}\varepsilon_{m}f_{m}(q)\Big)\\
=& {{\sigma_{1,1}-1}\over {\sigma_{1,l}-1}}\big(b_l(q)-\sigma^l_{1, l}\varepsilon_lf_l(q)\big)
-{{\sigma_{1,l}(\sigma_{1,1}-1)}\over {\sigma_{1,l}-1}}\big(b_{l-1}(q)-\sigma_{1,l}^{l-1}\varepsilon_lf_l(q)\big)\\
=& {{\sigma_{1,1}-1}\over {\sigma_{1,l}-1}}\big(b_{l}(q)-\sigma_{1,l}b_{l-1}(q)\big)\\
=&0
\end{align*}
as desired.
\vskip .2cm
Together with   all arguments as above, we complete the proof.
\end{proof}

\begin{proof}[{Proof of Theorem~\ref{main}}]
By Lemma~\ref{key}, we must have
$$m\geq k+1=[{n\over 2}]+1>{n\over 2}$$
as desired. This completes the proof of Theorem~\ref{main}.
\end{proof}

\begin{corollary}
The coefficient matrix of the system (\ref{e-system1}) has rank $k+1$.
\end{corollary}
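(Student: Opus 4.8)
The plan is to work over the field $\mathbb{Q}(q)$ of rational functions in $q$ and to show that the $k+1$ rows of the coefficient matrix of (\ref{e-system1}) are linearly independent; since that matrix has only $k+1$ rows, its rank is at most $k+1$, so independence of the rows is exactly what must be proved. First I would invoke Theorem~\ref{main}, which gives $m\geq k+1$, so the matrix is $(k+1)\times m$ with at least as many columns as rows. Then I would carry over the reduction made in the proof of Lemma~\ref{key}: after the surgeries and column moves described there, we may assume the $\sigma_{1,i}$ are pairwise distinct in $\mathbb{Q}(q)$ (equivalently, the weight multisets are pairwise distinct), while the relevant count of columns stays $\geq k+1$.

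Next I would pass to the transformed system (\ref{e-system2}), whose coefficient matrix $A(q)$ has $(l+1,i)$-entry $\sigma_{1,i}^{\,l}$ and is therefore a Vandermonde matrix in the nodes $\sigma_{1,1},\dots,\sigma_{1,m}$. Because these nodes are distinct, the $(k+1)\times(k+1)$ minor of $A(q)$ on the first $k+1$ columns equals $\prod_{1\le i<j\le k+1}(\sigma_{1,j}-\sigma_{1,i})\neq 0$ in $\mathbb{Q}(q)$, whence $\operatorname{rank}A(q)=k+1$. This computation is completely routine and uses only the distinctness secured above together with $m\geq k+1$.

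The substantive step, and the one I expect to be the main obstacle, is to transfer this conclusion back to the coefficient matrix of the original system (\ref{e-system1}), whose $(l+1,i)$-entry is $\sigma_l(q^{w_{i,1}},\dots,q^{w_{i,n}})$ rather than $\sigma_{1,i}^{\,l}$. The passage from (\ref{e-system1}) to (\ref{e-system2}) is governed by the Newton relation $l\sigma_l=(-1)^l\sigma_1^{\,l}+F_l$, but $F_l$ involves the power sums $s_2,\dots,s_l$, so the correction terms $\mathcal{F}_l(q)=\sum_i\varepsilon_iF_l f_i$ that are pushed onto the right-hand side are \emph{not} $\mathbb{Q}(q)$-combinations of the rows of (\ref{e-system1}); consequently the transformation is not an elementary row operation, and the equality $\operatorname{rank}A(q)=\operatorname{rank}(\text{matrix of }(\ref{e-system1}))$ is not automatic. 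I would therefore argue independence of the rows $R_l=\big(\sigma_l(q^{w_{i,1}},\dots,q^{w_{i,n}})\big)_{i=1}^m$ directly: if $\sum_{l=0}^k c_l(q)R_l=0$ nontrivially, then pairing this relation with the solution vector ${\bf x}(q)=(\varepsilon_i f_i(q))_i$ of (\ref{e-system1}), whose $l$-th coordinate equation reads $R_l\cdot{\bf x}(q)=\chi^l(M)$, yields $\sum_{l=0}^k c_l(q)\chi^l(M)=0$. I would then feed such a relation into the cyclic-equality mechanism of Lemma~\ref{key}, exploiting that the $\chi^l(M)$ are constants in $q$ while the $\sigma_{1,i}$ are non-constant, to force a contradiction with $M$ being nonbounding. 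Making this reduction airtight, in particular ruling out that distinct weight multisets could nevertheless produce a genuine $\mathbb{Q}(q)$-dependence among the $R_l$, is the delicate point on which the whole argument turns.
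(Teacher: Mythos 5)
Your first half matches what the paper evidently has in mind: reduce to pairwise distinct $\sigma_{1,i}$, quote $m\geq k+1$ from Theorem~\ref{main}, and observe that the matrix $A(q)$ of the transformed system (\ref{e-system2}) is a Vandermonde matrix in distinct nodes of $\mathbb{Q}(q)$, hence of rank $k+1$. Your further observation is correct and is the most valuable part of the proposal: because $F_l$ is a \emph{nonlinear} polynomial in the elementary symmetric functions (equivalently, it involves $s_2,\dots,s_l$, whose values at $p_i$ are not $i$-independent combinations of $\sigma_0,\dots,\sigma_l$ at $p_i$), replacing the row $\bigl(\sigma_l(q^{w_{i,1}},\dots,q^{w_{i,n}})\bigr)_i$ by $\bigl(\sigma_1^l(q^{w_{i,1}},\dots,q^{w_{i,n}})\bigr)_i$ is not an elementary row operation over $\mathbb{Q}(q)$, so $\operatorname{rank}A(q)=k+1$ does not automatically transfer to the coefficient matrix of (\ref{e-system1}). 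The paper states the corollary without supplying a bridge across this point, and in the end neither do you.

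The repair you sketch does not close the gap. If $\sum_{l=0}^{k}c_l(q)R_l=0$ nontrivially, pairing with ${\bf x}(q)$ yields only the single identity $\sum_{l}c_l(q)\chi^l(M)=0$, which is one $\mathbb{Q}(q)$-linear condition on a fixed vector of integers and carries no contradiction on its own. Nor can such a relation be ``fed into'' the cyclic-equality mechanism of Lemma~\ref{key}: that mechanism rests on the linear dependence of the $m+1$ truncated vectors ${\bf b}_0(q),\dots,{\bf b}_m(q)$ in $\mathbb{Q}(q)^m$, which is forced only in the overdetermined case $k+1>m$ --- precisely the case excluded here by Theorem~\ref{main}. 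Note also that distinctness of the $\sigma_{1,i}$ cannot by itself yield the conclusion: already for $k=2$ the $3\times 3$ minor $\det\bigl(\sigma_l(q^{w_{i_j,1}},\dots,q^{w_{i_j,n}})\bigr)$ vanishes whenever the three $\sigma_2$-values are an affine function of the three (distinct) $\sigma_1$-values, so any correct argument must use more of the fixed-point data than your proposal does. You have located the right obstacle, but the step you yourself label ``the delicate point'' is exactly the proof that is missing.
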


\section{An observation}\label{ob}
Suppose that $M^{2n}$ is a unitary $S^1$-manifold fixing isolated  points, where $M^{2n}$ is not necessarily assumed to be nonbounding.

\vskip .2cm
First assume that $w_{i,1}, ..., w_{i, |{\bf w}_i^-|}$ are all negative weights in the weight set ${\bf w}_i$. Now, using the same way as in \cite[(*), page 172]{Ko1},  we change the expression of $\chi^l(M)$   into
\begin{align}\label{chi-1}
\chi^l(M)=\sum_{i=1}^m\varepsilon_i(-1)^{|{\bf w}_i^-|}\frac{\sigma_l(q^{w_{i,1}},..., q^{w_{i,n}})q^{-\sum\limits_{j\leq |{\bf w}_i^-|}w_{i,j}}}{\prod\limits_{j\leq |{\bf w}_i^-|}(1-q^{-w_{i,j}})\prod\limits_{j>|{\bf w}_i^-|}(1-q^{w_{i,j}})}.
\end{align}
Since $\chi^l(M)$ is an integer, to calculate $\chi^l(M)$ is enough to determine the constant term in the right side of (\ref{chi-1}).
Obviously, only those terms with $|{\bf w}_i^-|=l$ in the right side of (\ref{chi-1}) can produce the constant term.  An easy argument shows that
$$\chi^l(M)=(-1)^l(n_l^+-n_l^-)$$
        where  $n_l^+$ (resp. $n_l^-$) denotes the number of those fixed points $p_i$ with $|{\bf w}_i^-|=l$ and $\varepsilon_i=+1$ (resp. $\varepsilon_i=-1$).
Then the top Chern number
$$c_n[M]=T_{1,-1}(M)=\sum_{l=0}^n(-1)^l\chi^l(M)=\sum_{l=0}^n (n_l^+-n_l^-).$$
On the other hand, since the Euler characteristic $\chi(M)$ is exactly equal to the number of all fixed points, we have
$$\chi(M)=\sum_{l=0}^n (n_l^++n_l^-).$$
Furthermore, we conclude that
\begin{equation}\label{chern-euler}
\chi(M)=c_n[M]+2\sum_{l=0}^n n_l^-
\end{equation}
and
\begin{equation}\label{chern-euler1}
\chi(M)+c_n[M]=2\sum_{l=0}^n n_l^+.
\end{equation}
 As in the case of almost complex, we see that all signs $\varepsilon_i$ of fixed points  are positive  if and only if   $\chi(M)=c_n[M]$. A  classical result of Thomas \cite{T} tells us that $\chi(M^{2n})=c_n[M]$
is sufficient for a unitary manifold to be almost complex. Whether $\chi(M^{2n})=c_n[M]$ holds or not
completely does depend upon the choices of all signs $\varepsilon_i$ of fixed points. It seems to be not easy to determine when $\varepsilon_i$ is chosen as $+1$ or $-1$. Both (\ref{chern-euler}) and (\ref{chern-euler1}) only tell us that
$\chi(M)\equiv c_n[M]\ \mod\ 2$. As a result, it is stated as follows.

 \begin{corollary}
 Let $M$ be a unitary $S^1$-manifold fixing isolated  points.  Then $\chi(M)$ and $c_n[M]$
 have the same parity.
 \end{corollary}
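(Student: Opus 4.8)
The plan is to read the parity statement directly off the two fixed-point expressions for $\chi(M)$ and $c_n[M]$ that are assembled in the discussion immediately preceding the corollary. The essential input is the identity $\chi^l(M)=(-1)^l(n_l^+-n_l^-)$, obtained by extracting the constant term of the Laurent expansion (\ref{chi-1}); granting this, everything reduces to bookkeeping in the nonnegative integers $n_l^+$ and $n_l^-$.

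First I would record the two aggregate formulas. Summing $(-1)^l\chi^l(M)$ over $l$ cancels the signs and yields the top Chern number $c_n[M]=\sum_{l=0}^n(n_l^+-n_l^-)$, while counting all fixed points gives $\chi(M)=\sum_{l=0}^n(n_l^++n_l^-)$. These are precisely (\ref{chern-euler}) and (\ref{chern-euler1}), which I would rewrite as $\chi(M)-c_n[M]=2\sum_{l=0}^n n_l^-$ and $\chi(M)+c_n[M]=2\sum_{l=0}^n n_l^+$.

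The conclusion is then immediate: either identity exhibits $\chi(M)-c_n[M]$ (respectively $\chi(M)+c_n[M]$) as twice a nonnegative integer, so $\chi(M)-c_n[M]$ is even and hence $\chi(M)\equiv c_n[M]\pmod 2$. No nonbounding hypothesis is required here, since the derivation of (\ref{chi-1}) and the constant-term argument do not use it; this is why the corollary is stated for an arbitrary unitary $S^1$-manifold with isolated fixed points.

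Because the proof is a one-line consequence of the established formulas, I do not anticipate any genuine obstacle at the level of the corollary itself. The only place demanding care — and the real content of the section — is the constant-term extraction giving $\chi^l(M)=(-1)^l(n_l^+-n_l^-)$: one must check that in (\ref{chi-1}) only the summands with $|{\bf w}_i^-|=l$ contribute a constant term, and that each such summand contributes exactly $(-1)^l\varepsilon_i$. Were I to present the corollary fully self-contained I would include that verification, but granting the formulas already derived above the parity claim follows at once.
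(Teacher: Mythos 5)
Your proposal is correct and follows exactly the paper's own route: it derives the parity claim from the identities $\chi(M)-c_n[M]=2\sum_{l} n_l^-$ and $\chi(M)+c_n[M]=2\sum_{l} n_l^+$, which in turn rest on the constant-term extraction giving $\chi^l(M)=(-1)^l(n_l^+-n_l^-)$, just as in the discussion preceding the corollary. Your remark that the nonbounding hypothesis is not needed here is also consistent with the paper's setup for this section.
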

As a consequence,
it is not difficult to see that  when $\chi(M)=2$, both (\ref{chern-euler}) and (\ref{chern-euler1}) force all $n_l^-$ to be zero, so $\chi(M)=c_n[M]$
and then $M$ is almost complex, as seen in \cite{Ko1, Ko2}.

\vskip .2cm

\noindent {\bf Acknowledgements.} The author would like to
thank Taras Panov for the information of the reference~\cite{T}, and also to thank
Wei Wang for his interest.

\footnotesize
\bibliographystyle{abbrv}

\end{document}